\newcommand{\eq}{\begin{equation}}
\newcommand{\qe}{\end{equation}}
\renewcommand{\S}{\mathcal{S}}
\newcommand{\R}{\mathbb{R}}                     
\newcommand{\abs}[1]{\left\vert#1\right\vert}
\newcommand{\norm}[1]{\big\lVert#1\big\lVert}
\newcommand{\X}{\mathbf{X}}   
\newtheoremstyle{theo}
{}
{}
{\itshape}
{\parindent}
{\bf}
{\ ---}
{.5em}
{}%
\theoremstyle{theo}
\newtheorem{theorem}{Theorem}[section]
\newtheorem{lemma}[theorem]{Lemma}
\newtheorem{proposition}[theorem]{Proposition}
\newtheoremstyle{def}%
{}
{}
{\itshape}
{\parindent}
{\bf}
{\ ---}
{.5em}
{}%
\theoremstyle{def}
\newtheorem{definition}{Definition}
\theoremstyle{remark}
\begin{document}
\sloppy

\pagestyle{headings} 
\title{Optimal designs for Lasso and Dantzig selector using Expander Codes}
\date{\today}
\keywords{Lasso; Dantzig Selector; Expander; Restricted Eigenvalue;}

\author{Yohann de Castro}
\address{YdC is with the D\'epartement de Math\'ematiques (CNRS UMR 8628), B\^atiment 425, Facult\'e des Sciences d'Orsay, Universit\'e Paris-Sud 11, F-91405 Orsay Cedex, France.}
\email{yohann.decastro@math.u-psud.fr}

\begin{abstract}
We investigate the high-dimensional regression problem using adjacency matrices of unbalanced expander graphs. In this frame, we prove that the $\ell_{2}$-prediction error and the $\ell_{1}$-risk of the lasso and the Dantzig selector are optimal up to an explicit multiplicative constant. Thus we can estimate a high-dimensional target vector with an error term similar to the one obtained in a situation where one knows the support of the largest coordinates in advance. 

Moreover, we show that these design matrices have an explicit restricted eigenvalue. Precisely, they satisfy the restricted eigenvalue assumption and the compatibility condition with an explicit constant. 

Eventually, we capitalize on the recent construction of unbalanced expander graphs due to Guruswami, Umans, and Vadhan, to provide a deterministic polynomial time construction of these design matrices.
\end{abstract}

\maketitle
\section{Introduction}

One of the recent breakthrough in Statistics and Image processing has been brought by the idea that one can recover a high-dimensional target vector from few (non-adaptative) linear measurements as soon as the target vector depends only on few coefficients in a well-chosen basis. Some seminal works can be found in \cite{MR1639094,MR1379242,MR2300700,MR2382644} and references therein. One of the major observation in this field relies on the fact that an optimal way of gathering information on the target is at random. Recently much emphasis has been put on the lasso \eqref{Lasso}, the Dantzig selector \eqref{Dantzig} and their ability to recover an almost sparse vector $\beta^\star\in\R^p$ from $n$ linear measurements in the high-dimensional setting, i.e., where $n$ is much smaller than $p$. Consider the linear model:
\begin{equation*}
 y =\X\beta^\star+z\,,
\end{equation*}
where $\X\in\R^{n\times p}$ is a design matrix and $z\in\R^n$ a random noise vector.  Assume that $z=(z_i)_{i=1}^n$ is such that $z_{1},\dots,z_{n}$ are identically distributed according to a centered Gaussian random variable with variance $\sigma^2$. At first glance, it seems impossible to recover $\beta^{\star}$ from the noisy observation $y$. As a matter of fact, we may ask whether there exists an integer $s$, as large as possible, such that one can recover the $s$ largest, in absolute value, coordinates of $\beta^{\star}$, i.e., a sparse approximation. This issue can be addressed by considering the lasso \cite{MR1379242}:
\begin{equation}\label{Lasso}
 \beta^{{\ell}} \in\arg \min_{\beta\in\R^p}\big\{
\norm{y-\X\beta}_2^2+\lambda_{{\ell}}\norm\beta_1\big\},
\end{equation}
where $\lambda_{{\ell}}$ is a tuning parameter; or a solution to the $\ell_1$-regularized problem called Dantzig selector \cite{MR2382644}:
\begin{equation}\label{Dantzig}
 \beta^{\mathrm{d}}\in\arg \min_{\beta\in\R^p}
\norm{\beta}_1\ \ \mathrm{s.t.}\ \ \lVert{\X^\top(y-\X\beta)}\lVert_\infty\leq\lambda_{\mathrm d}\,,
\end{equation}
where $\lambda_{\mathrm d}$ a tuning parameter. A large literature has shown that these estimators are suited to uncover sparse approximations from few observations as soon as the design matrix $\X$ satisfies, for instance, the $\mathrm{RIP}_2$ property \cite{MR2230846} or the coherence property \cite{MR2543688}. Although one can prove that random design matrices satisfy these properties with high-probability, providing a deterministic design matrix is challenging. However, a recent breakthrough \cite{bourgain2011explicit} gives an explicit construction of design matrices satisfying the $\mathrm{RIP}_2$ property of order $n^{1/2+\eta}$, for some small $\eta>0$. 

In this article, we consider adjacency matrix of an unbalanced expander graph with expansion constant no greater than $1/12$, see Definition \ref{Def_Expander}. For the sake of simplicity, we refer to it as expander design matrices. Observe that the expander design matrices satisfy neither the $\mathrm{RIP}_2$ property nor the coherence property, see Section \ref{Assumption}. Hence we have to carry out a new analysis of the lasso, and the Dantzig selector, in the expander design matrix framework. This analysis is inspired by a companion paper of the author which studies these estimators in the geometric functional analysis frame. More precisely, the article \cite{de2013remark} analyzes the design matrices with small distortion, i.e., for which the kernel has small $\ell_{2}$-intersection with the $\ell_{1}$-ball. Nevertheless, we follow a quite different path here since we are not able to compute the distortion of the kernel of a expander design matrix. Interestingly, our proof show that the expander design matrices satisfy four standard properties in the high-dimensional regression theory: the restricted eigenvalue assumption \cite{MR2533469}, the compatibility condition \cite{MR2576316}, the $\mathbf{H}_{s,1}(1/5)$ condition \cite{JN10}, and the universal distortion property \cite{de2013remark}, see Section \ref{Assumption}. Furthermore, we prove that these conditions hold with explicit constants, see Proposition \ref{prop:REExpander}.  Moreover, it gives an example of design matrix that satisfies these four properties but satisfies neither the $\mathrm{RIP}_2$ property nor the coherence property.

\begin{theorem}
\label{thm:thmIntro1}
Let $\Phi\in\{0,1\}^{n\times p}$ be the adjacency matrix of a $(2s,\varepsilon)$-unbalanced
expander with an expansion  constant $\varepsilon\leq1/12$ and left degree $d$. Set $\X=(1/{\sqrt d})\Phi$. If the quantities $1/\varepsilon$, $d$ are smaller than $p$ and $\lambda_{{\ell}}=20\,\sigma\sqrt{\log p}$, then it holds:
\begin{equation}\label{eq:RiskLassoExpander}
\norm{\beta^{{\ell}}-\beta^{\star}}_{1}\leq4
\min_{\substack{\S\subseteq\{1,\dotsc,p\},\\ \abs{\S}\leq s.}}
\Big[1.5\,\lambda_{{\ell}}\,\abs{\S}+\norm{\beta^\star_{\S^c}}_{1}\Big],
\end{equation}
where $\beta_{\S^c}$ denotes the sub-vector of $\beta$ obtained by removing all
the coordinates having indexes in $\S$, and:
\begin{equation}\label{eq:PredictionLassoExpander}
\norm{\X\beta^{{\ell}}-\X\beta^{\star}}_{{2}}\leq\min_{\substack{\S\subseteq\{1,\dotsc,p\},\\ \abs{\S}\leq s.}}
\Bigg[4.8\,\lambda_{{\ell}}\,\sqrt{\abs{\S}}+0.9\,\frac{\norm{\beta^\star_{\S^c}}_{1}}{\sqrt{\abs{\S}}}\Bigg],
\end{equation}
with a probability greater than $1-1/({p\sqrt{2\pi\log p}})$.
\end{theorem}
\noindent
These inequalities are optimal up to a multiplicative constant less than $6$. Indeed, observe that Equation \eqref{eq:RiskLassoExpander} shows that the $\ell_{1}$-risk (the left hand side) is upper bounded by the optimal risk (given by the best approximation by $s$-sparse vectors $\norm{\beta^\star_{\S^c}}_{1}$) and a soft-thresholding term $\lambda_{{\ell}}\,s$, see Figure \ref{Fig:fig1}. It shows that we recover the $s$ largest coefficients of $\beta^{\star}$ up to a coordinate wise error of the order $\lambda_{{\ell}}$. Note this is essentially the noise level $\sigma$ up to the square root of a log factor. 

\begin{figure}[!h]
\center
\includegraphics[height=5cm]{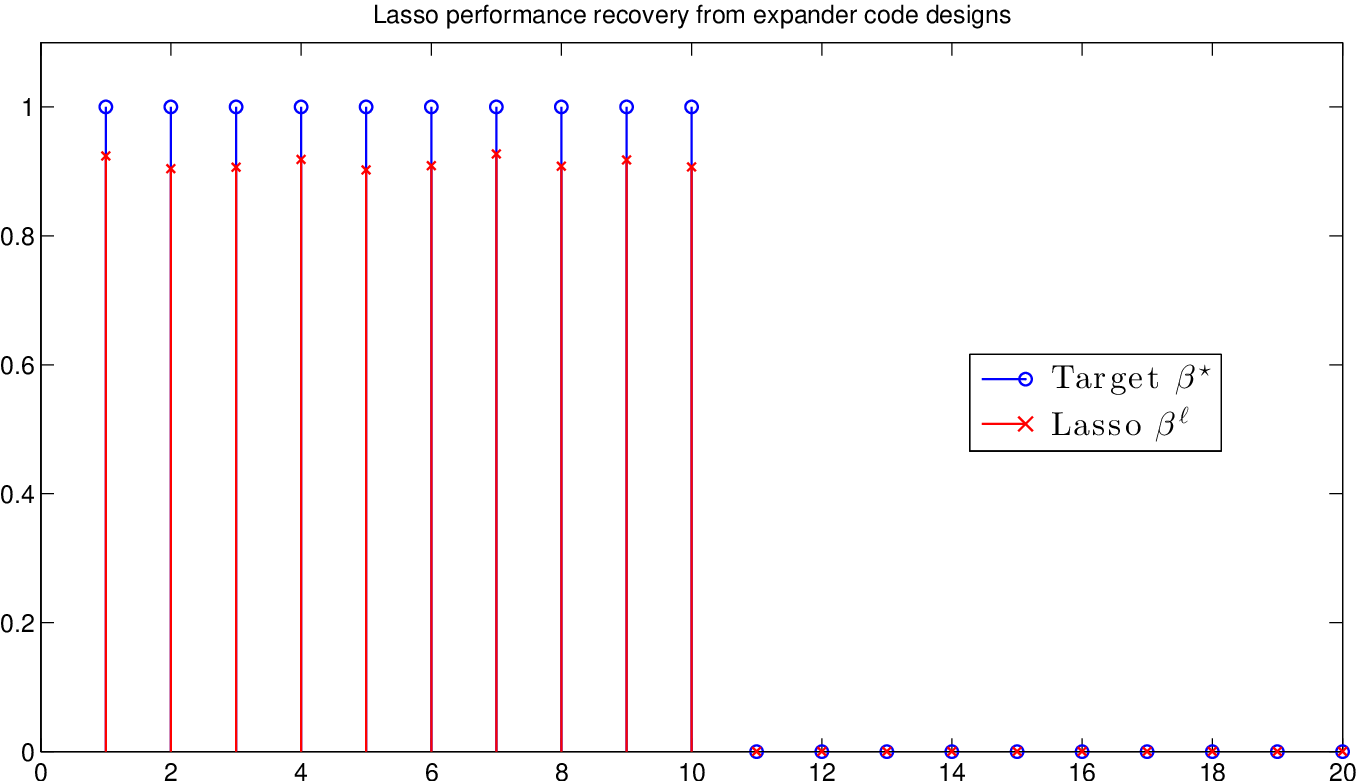}
\caption{Lasso estimator $\beta^{\ell}$ obtained from expander code designs exhibits a soft-thresholding phenomenon: coordinate wise error is of the order $\lambda_{{\ell}}$. This well-established behavior is captured in the upper bound \eqref{eq:RiskLassoExpander}.}\label{Fig:fig1}
\end{figure}

Moreover, the $\ell_{2}$-prediction error \eqref{eq:PredictionLassoExpander} is upper bounded by $\sigma\sqrt s$ (up to the square root of a log factor) which is the best prediction error in the linear model. As a matter of fact, it corresponds to a situation where one performs ordinary least squares on the set of columns of $\X$ given by the support of the $s$ largest entires of $\beta^{\star}$. The remaining term ${\norm{\beta^\star_{\S^c}}_{1}}/{\sqrt s}$ can be understood has the approximation error by $s$-sparse vectors. 

Eventually, observe the upper bound given by Theorem \ref{thm:thmIntro1} is optimal in the sense that it is the $\ell_{1}$-risk and the $\ell_{2}$-prediction error that we get if we would have known the support of the $s$ largest coordinates, in absolute value, of the signal $\beta^{\star}$ in advance and the remaining term $\beta^\star_{\S^c}$. We prove the same result for the Dantzig selector.
\begin{theorem}
Let $\Phi\in\{0,1\}^{n\times p}$ be the adjacency matrix of a $(2s,\varepsilon)$-unbalanced
expander with an expansion  constant $\varepsilon\leq1/12$ and left degree $d$. Set $\X=(1/{\sqrt d})\Phi$. If $1/\varepsilon$, $d$ are smaller than $p$ and $\lambda_{\mathrm d}=9\,\sigma\sqrt{\log p}$, then it holds:
\begin{equation*}
\norm{\beta^{\mathrm d}-\beta^{\star}}_{1}\leq9\min_{\substack{\S\subseteq\{1,\dotsc,p\},\\ \abs{\S}\leq s.}}
\Big[1.5\,\lambda_{\mathrm d}\,\abs{\S}+\norm{\beta^\star_{\S^c}}_{1}\Big],
\end{equation*}
\begin{equation*}
\norm{\X\beta^{\mathrm d}-\X\beta^{\star}}_{{2}}\leq\min_{\substack{\S\subseteq\{1,\dotsc,p\},\\ \abs{\S}=\leq s.}}
\Bigg[4.8\,\lambda_{\mathrm d}\,\sqrt{\abs{\S}}+0.9\,\frac{\norm{\beta^\star_{\S^c}}_{1}}{\sqrt{\abs{\S}}}\Bigg],
\end{equation*}
with a probability greater than $1-1/({p\sqrt{2\pi\log p}})$.
\end{theorem}

In Section \ref{Deterministic Design}, we prove that the expander design matrices can be deterministically constructed \cite{MR2590822} based on Paravaresh-Vardy codes \cite{Par}. In this frame, our analysis gives the following results.

\begin{proposition}\label{prop:principalLassoDeterministeIntro}
There exists a positive universal constant $\theta_{0}$ such that the following holds. For all $s,p$ such that $8\leq s\leq p/2$, there exists a deterministic polynomial time construction of a design matrix $\X\in\R^{n\times p}$ where:
\begin{equation}\label{eq:bornmesuredeterministes}
n\leq e\,s\,(12\,\theta_{0}\,\log p\log s )^{3\log(s)},
\end{equation}
with $e$ the Euler number, such that for $\lambda_{{\ell}}=20\,\sigma\sqrt{\log n}$, we have:
\begin{equation*}
\norm{\beta^{{\ell}}-\beta^{\star}}_{1}\leq4
\min_{\substack{\S\subseteq\{1,\dotsc,p\},\\ \abs{\S}\leq s.}}
\Big[1.5\,\lambda_{{\ell}}\,\abs{\S}+\norm{\beta^\star_{\S^c}}_{1}\Big],
\end{equation*}
\begin{equation*}
\norm{\X\beta^{{\ell}}-\X\beta^{\star}}_{{2}}\leq\min_{\substack{\S\subseteq\{1,\dotsc,p\},\\ \abs{\S}\leq s.}}
\Bigg[4.8\,\lambda_{{\ell}}\,\sqrt{\abs{\S}}+0.9\,\frac{\norm{\beta^\star_{\S^c}}_{1}}{\sqrt{\abs{\S}}}\Bigg],
\end{equation*}
with a probability greater than $1-1/({p\sqrt{2\pi\log p}})$.
\end{proposition}

\noindent
Similarly, we have the following result for the Dantzig selector.

\begin{proposition}\label{prop:principalDantzigDeterministeIntro}
There exists a positive universal constant $\theta_{0}$ such that the following holds. For all $s,p$ such that $8\leq s\leq p/2$, there exists a deterministic polynomial time construction of a design matrix $\X\in\R^{n\times p}$ where:
\[
n\leq e\,s\,(12\,\theta_{0}\,\log p\log s )^{3\log(s)},
\]
such that for $\lambda_{\mathrm d}=9\,\sigma\sqrt{\log n}$, we have:
\begin{equation*}
\norm{\beta^{\mathrm d}-\beta^{\star}}_{1}\leq9
\min_{\substack{\S\subseteq\{1,\dotsc,p\},\\ \abs{\S}\leq s.}}
\Big[1.5\,\lambda_{\mathrm d}\,\abs{\S}+\norm{\beta^\star_{\S^c}}_{1}\Big],
\end{equation*}
\begin{equation*}
\norm{\X\beta^{\mathrm d}-\X\beta^{\star}}_{{2}}\leq\min_{\substack{\S\subseteq\{1,\dotsc,p\},\\ \abs{\S}\leq s.}}
\Bigg[4.8\,\lambda_{\mathrm d}\,\sqrt{\abs{\S}}+0.9\,\frac{\norm{\beta^\star_{\S^c}}_{1}}{\sqrt{\abs{\S}}}\Bigg],
\end{equation*}
with a probability greater than $1-1/({p\sqrt{2\pi\log p}})$.
\end{proposition}

\noindent
It shows that, with a sensible budget of observations \eqref{eq:bornmesuredeterministes}, one can construct in polynomial time a design matrix $\X$ for which the lasso, or the Dantzig selector, recovers a sparse approximation of all signal $\beta^{\star}$.

This articles folds into four parts. First, we review some results on unbalanced expander graphs. Then we give the proofs of the main results. The third part gives evidence that expander design matrices matrices satisfy neither the $\mathrm{RIP}_2$ property nor the coherence property but they satisfy the restricted eigenvalue assumption, the compatibility condition, the $\mathbf{H}_{s,1}(1/5)$ condition, and the universal distortion property. Moreover, we give an explicit formulation of the constants involved in these four properties. Last but not least, we present a deterministic polynomial time construction of expander design matrices in the last part.

\section{Unbalanced expander graphs}\label{Expander}
We recall some standard facts about adjacency matrices of unbalanced expander graphs. Let us denote a {bipartite} graph $G=(A,B,E)$ where the set of the left vertices is denoted $A$ and has size $p$, the set of the right vertices is denoted $B$  and has size $n$, and $E$ is the set of the edges between $A$ and $B$. Suppose that $G$ has regular left degree $d$, i.e., every vertex in $A$ has exactly $d$ neighbors in $B$, then consider the {normalized adjacency matrix} $\X\in\R^{n\times p}$ given by:
\begin{equation*}
 \X_{ij}=\begin{cases}
          \frac1{\sqrt d} & \mathrm{if}\ i\ \mathrm{is\ connected\ to}\ j\,,\\
	  0 & \mathrm{otherwise}\,,
         \end{cases}
\end{equation*}
where $i\in \{1,\dotsc,n\}$ and $j\in\{1,\dotsc,p\}$. 

\begin{definition}[$(s,\varepsilon)$-unbalanced expander]\label{Def_Expander}
An $(s,\varepsilon)$-unbalanced expander is a bipartite simple graph $G=(A,B,E)$ with
left degree $d$ such that for any $S\subset A$ with $\abs S\leq s$, the set of
neighbors $N(S)$ of $S$ has size:
\begin{equation}\label{Expansion}
 \abs{N(S)}\geq(1-\varepsilon)\,d\abs S\,.
\end{equation}
The parameter $\varepsilon$ is called the {expansion constant}.
\end{definition}
\noindent Subsequently we shall work
with $\varepsilon= 1/12$. Note $\varepsilon$ is fixed and {does not}
depend on other parameters. One important property of expander design matrices is that they satisfy the $\mathrm{RIP}_{1}$ property.

\begin{lemma}[Theorem 1 in \cite{Ber}]
\label{Lemma UP}
If the quantities $1/\varepsilon$ and $d$ are smaller than $p$ then $(1/\sqrt d)\X$ satisfies the $\mathrm{RIP}_1$ property: 
$\forall\gamma\in\R^p\,,\ \forall S\subseteq\{1,\dotsc,p\}$ such that $\abs S\leq s$,
\begin{equation} \label{eq:RIP1}
{(1-2\varepsilon)}\,\norm{\gamma_S}_1\leq(1/\sqrt d)\norm{\X\gamma_{S}}_1\leq\norm{\gamma_S}_1\,.
\end{equation}
\end{lemma}
\noindent
In Section \ref{Deterministic Design}, we present the state-of-the-art constructions in unbalanced expander graph theory.

\section{Main results and proofs}
\subsection{Lasso case}
\begin{theorem}\label{thm:principalLasso}
Let $\Phi\in\{0,1\}^{n\times p}$ be the adjacency matrix of a $(2s,\varepsilon)$-unbalanced
expander with an expansion  constant $\varepsilon\leq1/12$ and left degree $d$. Set $\X=(1/{\sqrt d})\Phi$. If the quantities $1/\varepsilon$ and $d$ are smaller than $p$ then for all $\lambda_{{\ell}}>10\,\sigma\sqrt{\log p}/3$, it holds:
\begin{equation*}
\norm{\beta^{{\ell}}-\beta^{\star}}_{1}\leq\frac{2}{\big(1-\frac{\lambda_0}{\lambda_{{\ell}}}
\big)-\frac{2}{5}
} \
\min_{\substack{\S\subseteq\{1,\dotsc,p\},\\ \abs{\S}=k,\ k\leq s.}}
\Big[\frac{36}{25}\,\lambda_{{\ell}}\,k+\norm{\beta^\star_{\S^c}}_{1}\Big],
\end{equation*}
where $\lambda_0:=2\,\sigma\sqrt{\log p}$, and:
\begin{equation*}
\norm{\X\beta^{{\ell}}-\X\beta^{\star}}_{{2}}\leq\min_{\substack{\S\subseteq\{1,\dotsc,p\},\\ \abs{\S}=k,\
k\leq s.}}
\Bigg[\frac{24}5\lambda_{{\ell}}\,\sqrt k+\frac{5\norm{\beta^\star_{\S^c}}_{1}}{6\sqrt k}\Bigg],
\end{equation*}
with a probability greater than $1-1/p{\sqrt{2\pi\log p}}$.
\end{theorem}
\subsection{Proof of Theorem \ref{thm:principalLasso}}
\noindent
We denote by $\Phi$ the adjacency matrix so that $\Phi=\sqrt d\X$. We begin with a lemma. 
\begin{lemma}\label{Lem:part1}
For all $\gamma\in\R^{p}$ and for all $S\subseteq\{1,\dotsc,p\}$ such that $\abs S\leq s$,
\begin{equation*}
\norm{\gamma_{S}}_{1}\leq\frac{\sqrt{s}}{1-2\varepsilon}\norm{\X\gamma}_{2}+\frac{2\varepsilon}{1-2\varepsilon}\norm\gamma_{1}\,.
\end{equation*}
\end{lemma}
\begin{proof}
Without loss of generality, assume $S$ consists of the $s$ largest, in magnitude, coefficients of $\gamma$. Partition the coordinates into sets $S_{0}$, $S_{1}$, $S_{2}$, ... ,$S_{q}$, such that the coordinates in the set $S_{l}$ are not larger than the coordinates in $S_{l-1}$, $l\geq1$, and all sets but the last one $S_{q}$ have size $s$. Observe that we can choose $S_{0}=S$. Let $\Phi'$ be a sub matrix of $\Phi$ containing rows from $N(S)$, the set of neighbors of $S$. Using Cauchy-Schwartz inequality, it holds:
\begin{equation*}
\sqrt{sd}\norm{\Phi\gamma}_{2}\geq \sqrt{sd}\norm{\Phi'\gamma}_{2}\geq \frac{\sqrt{sd}}{\sqrt{\lvert{N(S)}\lvert}}\norm{\Phi'\gamma}_{1}\geq \norm{\Phi'\gamma}_{1}.
\end{equation*}
We finish the proof with a standard argument, see Lemma 11 in \cite{Ber}. From \eqref{eq:RIP1} we get that:
\begin{align*}
\norm{\Phi'\gamma}_{1}\geq &\,\norm{\Phi'\gamma_{S}}_{1}-\sum_{l\geq1}\sum_{(i,j)\in E, i\in S_{l},j \in N(S)}\abs{\gamma_{i}}\\
\geq &\, d(1-2\varepsilon)\norm{\gamma_{S}}_{1}
-\sum_{l\geq1}\lvert E\cap(S_{l}\times N(S))\lvert\,\min_{i\in S_{l-{1}}}\abs{\gamma_{i}}\\
\geq &\,  d(1-2\varepsilon)\norm{\gamma_{S}}_{1}
-\frac{1}{s}\sum_{l\geq1}\lvert E\cap(S_{l}\times N(S))\lvert\,\norm{\gamma_{S_{l-{1}}}}_{1}.
\end{align*}
From the expansion property \eqref{Expansion} of $G$ it follows that, for $l\geq1$, we have $\lvert N(S\cup S_{l})\lvert\geq d(1-\varepsilon)\lvert S\cup S_{l}\lvert$. Hence at most $2\varepsilon d s$ edges can cross from $S_{l}$ to $N(S)$, and so
\begin{align*}
\sqrt{sd}\norm{\Phi\gamma}_{2}& \geq d(1-2\varepsilon)\norm{\gamma_{S}}_{1}-2\varepsilon d\sum_{l\geq1}\norm{\gamma_{S_{l-{1}}}}_{1}/s\\
& \geq d(1-2\varepsilon)\norm{\gamma_{S}}_{1}-2\varepsilon d\norm\gamma_{1}.
\end{align*}
The result follows since $\Phi=\sqrt d\X$.
\end{proof}
\noindent
Since $1/(1-2\varepsilon)\leq6/5$ and  $2\varepsilon/(1-2\varepsilon)\leq1/5$, the aforementioned lemma shows that for all $\gamma\in\R^{p}$, for all $S\subseteq\{1,\dotsc,p\}$ such that $\abs S\leq s$,
\begin{equation}\label{eq:UDP18}
5\norm{\gamma_{S}}_{1}\leq{6}\sqrt{s}\norm{\X\gamma}_{2}+\norm\gamma_{1}\,.
\end{equation}
We have the following lemma.
\begin{lemma}\label{lem:remresult}
 Assume that $\X$ satisfies \eqref{eq:UDP18}. Conditioned on the event:
 \begin{equation}\label{eq:BoundNoise}
 \norm{\X^{\top}z}_{\infty}\leq \lambda_0\,,
 \end{equation}
 for all $ \lambda_{{\ell}}>{5\lambda_0}/3$, it holds:
\begin{equation*}
\norm{\beta^{{\ell}}-\beta^{\star}}_{1}\leq\frac{2}{\big(1-\frac{\lambda_0}{\lambda_{{\ell}}}
\big)-\frac{2}{5}
} \
\min_{\substack{\S\subseteq\{1,\dotsc,p\},\\ \abs{\S}=k,\ k\leq s.}}
\Big[\frac{36}{25}\,\lambda_{{\ell}}\,k+\norm{\beta^\star_{\S^c}}_{1}\Big];
\end{equation*}
\begin{equation*}
\norm{\X\beta^{{\ell}}-\X\beta^{\star}}_{{2}}\leq\min_{\substack{\S\subseteq\{1,\dotsc,p\},\\ \abs{\S}=k,\
k\leq s.}}
\Bigg[\frac{24}5\lambda_{{\ell}}\,\sqrt k+\frac{5\norm{\beta^\star_{\S^c}}_{1}}{6\sqrt k}\Bigg].
\end{equation*}
\end{lemma}
\begin{proof}
This lemma is given by Theorem 2.1 and Theorem 2.2 in \cite{de2013remark}.
\end{proof}
\noindent
Lemma \ref{lem:remresult} gives the result on the event \eqref{eq:BoundNoise}. Thus, we need to give an upper bound on the probability of this event. Combining Lemma \ref{lem:remresult} and Proposition \ref{prop:NoiseLevel} (choose $\lambda_{0}:=\lambda_{0}(1)$), we finish the proof.

\subsection{Dantzig selector case}


\begin{theorem}\label{thm:PrincipalDantz}
Let $\Phi\in\{0,1\}^{n\times p}$ be the adjacency matrix of a $(2s,\varepsilon)$-unbalanced
expander with an expansion  constant $\varepsilon\leq1/12$ and left degree $d$. Set $\X=\frac1{\sqrt d}\Phi$. If the quantities $1/\varepsilon$ and $d$ are smaller than $p$ then for all $\lambda_{\mathrm d}>3\,\sigma\sqrt{\log p}$, it holds:
\begin{equation*}
\norm{\beta^{\mathrm d}-\beta^{\star}}_{1}\leq\frac{4}{\big(1-\frac{\lambda_0}{\lambda_{\mathrm d}}
\big)-\frac{1}{3}
} \
\min_{\substack{\S\subseteq\{1,\dotsc,p\},\\ \abs{\S}=k,\ k\leq s.}}
\Big[\frac{36}{25}\,\lambda_{\mathrm d}\,k+\norm{\beta^\star_{\S^c}}_{1}\Big],
\end{equation*}
where $\lambda_0:=2\,\sigma\sqrt{\log p}$, and:
\begin{equation*}
\norm{\X\beta^{\mathrm d}-\X\beta^{\star}}_{{2}}\leq\min_{\substack{\S\subseteq\{1,\dotsc,p\},\\ \abs{\S}=k,\
k\leq s.}}
\Bigg[\frac{24}5\lambda_{\mathrm d}\,\sqrt k+\frac{5\norm{\beta^\star_{\S^c}}_{1}}{6\sqrt k}\Bigg],
\end{equation*}
with a probability greater than $1-1/p{\sqrt{2\pi\log p}}$.
\end{theorem}
\subsection{Proof of Theorem \ref{thm:PrincipalDantz}}
\noindent
Note that \eqref{eq:UDP18} holds. We begin with a lemma.
\begin{lemma}\label{lem:remresultDantzig}
 Assume that $\X$ satisfies \eqref{eq:UDP18}. Conditioned on the event $\norm{\X^{\top}z}_{\infty}\leq \lambda_0$, for all $ \lambda_{{\ell}}>3\,{\lambda_0}/2$, it holds:
\begin{equation*}
\norm{\beta^{\mathrm d}-\beta^{\star}}_{1}\leq\frac{4}{\big(1-\frac{\lambda_0}{\lambda_{\mathrm d}}
\big)-\frac{1}{3}
} \
\min_{\substack{\S\subseteq\{1,\dotsc,p\},\\ \abs{\S}=k,\ k\leq s.}}
\Big[\frac{36}{25}\,\lambda_{\mathrm d}\,k+\norm{\beta^\star_{\S^c}}_{1}\Big],
\end{equation*}
\begin{equation*}
\norm{\X\beta^{\mathrm d}-\X\beta^{\star}}_{{2}}\leq\min_{\substack{\S\subseteq\{1,\dotsc,p\},\\ \abs{\S}=k,\
k\leq s.}}
\Bigg[\frac{24}5\lambda_{\mathrm d}\,\sqrt k+\frac{5\norm{\beta^\star_{\S^c}}_{1}}{6\sqrt k}\Bigg].
\end{equation*}
\end{lemma}
\begin{proof}
This lemma is a consequence of Theorem 2.1 and 2.2 in \cite{de2013remark}.
\end{proof}
\noindent
Combining Lemma \ref{lem:remresultDantzig} and Proposition \ref{prop:NoiseLevel}, we finish the proof.

\subsection{Noise control}

\begin{proposition}
\label{prop:NoiseLevel}
Suppose $z=(z_i)_{i=1}^n$ is a centered Gaussian noise with variance $\sigma^2$ such that the
$z_i$'s are $\mathcal{N}\big(0,{\sigma^2}\big)$-distributed and could be correlated. {Then}, for $t\geq1$ and $ \lambda_{0}(t)=(1+t)\,\sigma\sqrt{\log p}$, it holds:
\begin{equation}\label{High Proba}
 \mathbb
P\big(\norm{\X^\top z}_\infty>\lambda_{0}(t)\big)\leq\frac{\sqrt
2}{(1+t)\sqrt{\pi\log p}\, p^{\frac{(1+t)^2}2-1}}\,.
\end{equation}
\end{proposition}
\begin{proof}
This is a standard result, see Lemma A.1 in \cite{de2013remark} for instance.
\end{proof}

Note that, by replacing $\lambda_{0}$ by $\lambda_{0}(t)$ in the statements of our theorems, it is possible
to replace all the probabilities of the form $1-\eta_n$ by probabilities of the form \eqref{High
Proba}. 

\section{Standard conditions}\label{Assumption}

\subsection{RIP properties}

We begin with the following generalized definition of the Restricted Isometry Property ($\mathrm{RIP}$).
\begin{definition} 
A matrix $\X\in\R^{n\times p}$ satisfies $\mathrm{RIP}(q, s, \delta)$ if and only if, for any $s$-sparse vector $\gamma$:
\begin{align*}
(1-\delta)\lVert\gamma\lVert_q\leq\lVert \X\gamma\lVert_q\leq(1+\delta)\lVert\gamma\lVert_q\,,
\end{align*}
\end{definition}
\noindent
Lemma \ref{Lemma UP} shows that an expander design $(1/\sqrt d)\X$ constructed from a $(2s,\varepsilon)$-unbalanced expander graph satisfies the $\mathrm{RIP}(1, s, 2\varepsilon)$ property, $\mathrm{RIP}_1$ for short. Conversely, any binary matrix $\X$ which satisfies the $\mathrm{RIP}_1$ property with proper parameters, and with each column having exactly $d$ ones, is an adjacency matrix of an unbalanced expander, see \cite{Ber}. Hence, expander design matrices are closely related to $\mathrm{RIP}_1$ property but they do not satisfy $\mathrm{RIP}_2$ property.

Restricted isometry property, for the case $q=2$, was introduced in \cite{MR2230846}. It was shown \cite{MR2230846, MR2382644} that if $\X$ satisfies this property, then the lasso and the Dantzig selector uncover a sparse approximation of the signal. Since then there has been a tremendous amount of work on $\mathrm{RIP}_2$ matrices. Unfortunately, expander designs cannot satisfy the $\mathrm{RIP}_2$ property, unless their number of rows is large \cite{chandar2008negative}. As a matter of fact, sparse binary matrices must have at least $n=\mathcal O(s^2)$ rows.
\subsection{Coherence property}

In $2007$, \cite{MR2543688} obtained an estimate in prediction for the lasso. They used a so-called {coherence property} following the work of \cite{MR2237332}. For any design matrix satisfying the {coherence property}, Theorem 1.2 in \cite{MR2543688} shows that, with high probability, it holds:
\begin{equation*}
\frac1n\norm{\X\beta^\star-\X\beta^{{\ell}} }_2^2\leq C'\,.\,\sigma^2
\,\frac{s\log p}n\,,
\end{equation*}
for a large set of $s$-sparse vectors $\beta^{\star}$, where $C'>0$ is some positive numerical constant. The
coherence $\mu$ is the maximum correlation between pairs of predictors:
\[
 \mu=\sup_{1\leq i< j\leq p} {\X_i^\top \X_j}
 \,.
\]
The coherence property \cite{MR2543688} is then $ \mu\leq A_0 (\log p) ^{-1}$, where $A_0$ is some positive constant. This property allows to deal with {random} design matrices. 

In the context of adjacency matrices, one can check that the coherence property is equivalent to the definition of a $(2,\varepsilon_0)$-unbalanced expander graph with an expansion constant $\varepsilon_0$ such that:
\[
\varepsilon_0=\frac\mu2\leq \frac{A_0}2 (\log p) ^{-1} \,,
\]
which is severely restrictive and cannot be used in the frame of expander codes.

\subsection{Restricted eigenvalue and compatibility condition}

The {restricted eigenvalue assumption}  \cite{MR2533469} and the compatibility condition \cite{MR2576316} consider the smallest eigenvalue with respect to a cone restriction. For the sake of simplicity, we present only the compatibility condition but the same analysis can be carried out for the restricted eigenvalue assumption.
\begin{definition}[$\mathrm{Compatibility}(s,c_0)$
]\label{def:Compatibility}
A matrix $\X\in\R^{n\times p}$ satisfies $\mathrm{Compatibility}(s,c_0)$ if and only if:
\begin{equation*}
 \phi(s,c_0) := \min_{\substack{\S\subseteq\{1,\dotsc,p\}\\\abs \S\leq
s}}\min_{\substack{\gamma\neq 0\\\lVert{\gamma_{\S^c}}\lVert_{1}\leq
c_0\lVert{\gamma_{\S}}\lVert_{1}}}\frac{\sqrt{\abs \S}\lVert{\X\gamma}
\lVert_{2}}{\lVert{\gamma_{\S}}\lVert_{1}}>0\,.
\end{equation*}
The constant $\phi(S,c_0)$ is called the compatibility constant or the $(S,c_0)$-restricted $\ell_1$-eigenvalue.
\end{definition}
\noindent 
One can established that \cite{MR2576316}, with high probability, if $\beta^{\star}$ is an $s$-sparse vector then:
\begin{equation}\label{eq:Compatibility}
\frac1{\sqrt n}\norm{\X\beta^\star-\X\beta^{{\ell}} }_2\leq C''\,.\,\frac{\sigma
}{\phi(s,3)}\,.\,\sqrt{\frac{s\log p}n}\,,
\end{equation}
where $C''>0$ is some positive constant {depending on $\phi(s,3)$ the $(s,3)$-restricted
$\ell_1$-eigenvalue}. Note that, invoking \eqref{eq:UDP18}, we derive the following proposition. 
\begin{proposition}\label{prop:REExpander}
Let $\Phi\in\{0,1\}^{n\times p}$ be the adjacency matrix of a $(2s,\varepsilon)$-unbalanced
expander with an expansion  constant $\varepsilon\leq1/12$ and left degree $d$. Set $\X=(1/{\sqrt d})\Phi$. If the quantities $1/\varepsilon$ and $d$ are smaller than $p$ then for all $c_0<4$, for all $S\subseteq\{1,\dotsc,p\}$ such that $\abs S\leq s$, and for all $\gamma\neq 0$ such that $\lVert{\gamma_{S^c}}\lVert_1\leq c_0\lVert{\gamma_{S}}\lVert_1$, it holds:
\begin{equation}\label{eq:BorneRE}
\frac{\sqrt{\abs \S}\lVert{\X\gamma}
\lVert_{2}}{\lVert{\gamma_{\S}}\lVert_{1}}\geq\frac{4-c_{0}}{6}\,.
\end{equation}
Hence, the $(s,c_0)$-restricted $\ell_1$-eigenvalue is lower bounded by the right hand side of \eqref{eq:BorneRE}.
\end{proposition}

\noindent
Thus, expander design matrices satisfy the compatibility condition with a constant $\phi(s,3)=1/6$. Note the same conclusion can be drawn for the restricted eigenvalue assumption.

\subsection{The Juditsky-Nemirovski condition}\label{Nemirovski}

In parallel to our work, \cite{JN10} gave a verifiable condition of performance of the lasso and the Dantzig selector. Although the matrices constructed from the expander graphs are not specifically studied in \cite{JN10}, they study uncertainty conditions similar to the one stated in \eqref{eq:UDP18}. More precisely, Section $5.3$ in \cite{JN10} says that a design $\X$ satisfies $\mathbf{H}_{s,1}(1/5)$ if and only if for all $\gamma\in\R^{p}$, for all $S\subseteq\{1,\dotsc,p\}$ such that $\abs S\leq s$,
\[
\lVert\gamma_S\lVert_1\leq\hat\lambda\,s\,\lVert \X\gamma\lVert_2+\frac15\lVert\gamma\lVert_1\,,
 \]
 for some constant $\hat\lambda>0$. 
Observe that \eqref{eq:UDP18} is a stronger requirement than $\mathbf{H}_{s,1}(1/5)$. Hence \eqref{eq:UDP18} implies $\mathbf{H}_{s,1}(1/5)$. Thus Lemma \ref{Lem:part1} shows that expander design matrices satisfy $\mathbf{H}_{s,1}(1/5)$.

\subsection{Universal distortion property}

In \cite{de2013remark}, one presents Universal Distortion Property. 

\begin{definition}[$\mathrm{UDP}(S_0,\kappa_0, \Delta )$]
Given $1\leq S_0\leq p$ and $0<\kappa_0<1/2$, we say that a matrix
$\X\in\R^{n\times p}$ satisfies the universal distortion condition of order
$S_0$,
magnitude $\kappa_0$ and parameter $\Delta$ {if and
only if} for all $ \gamma\in\R^p $, for all integers $ s\in\{1,\dotsc,S_0\}$, for all
subsets
$\S\subseteq\{1,\dotsc,p\}$ such that $\abs\S=s$, it holds:
\begin{equation*}
\norm{\gamma_\S}_{{1}}\leq\Delta\sqrt
s\,\norm{\X \gamma}_{{2}}+\kappa_0\norm{\gamma}_{1}.
\end{equation*}
\end{definition}
\noindent
Lemma \ref{Lem:part1} shows that if the quantities $1/\varepsilon$ and $d$ are smaller than $p$ then the normalized adjacency matrix of an $(s,\varepsilon)$-unbalanced expander with an expansion  constant $\varepsilon<1/2$ and left degree $d$ satisfies $\mathrm{UDP}(S_0,\kappa_0, \Delta )$ with $S_{0}=s$, $\Delta=1/(1-2\varepsilon)$, and $\kappa_0=2\varepsilon/(1-2\varepsilon)$.

\subsection{Comparison of the standard conditions}

Let $\X=(1/\sqrt d)\Phi$ where $\Phi$ is the adjacency matrix of an unbalanced expander graph with left degree $d$. One sees that $\X$ satisfies neither the $\mathrm{RIP}_2$ property nor the coherence property. However, the aforementioned subsections show that $\X$ satisfies Restricted Eigenvalue assumption, Compatibility Condition, $\mathbf{H}_{s,1}(1/5)$ and Universal Distortion Property with explicit constants. We have chosen this latter to derive our results but one would have get the same upper bound of the $\ell_{1}$-risk and $\ell_{2}$-prediction from the other conditions, see \cite{de2013remark} for further details.

\section{Deterministic construction of design matrices using Parvaresh-Vardy codes}\label{Deterministic Design}

A long standing issue in error-correcting code theory is to give deterministic and polynomial time constructions of expander codes \cite{MR2590822}. These constructions have already been used in the compressed sensing framework, see \cite{MR2582882} for instance. To the best of our knowledge, this paper is the first work that uses these constructions with the lasso and the Dantzig selector. The state-of-the-art constructions of expander codes use {Parvaresh-Vardy codes} \cite{Par}. More precisely, Guruswami, Umans, and Vadhan have recently proved the following theorem.

\begin{theorem}[\cite{MR2590822}]\label{Explicit Construction}
There exists a universal constant $\theta_0>0$ such that the following holds. For all $\alpha>0$ and for all $p,s,\varepsilon>0$, there exists a deterministic polynomial time construction of a $(2s,\varepsilon)$-unbalanced expander graph $G=(A,B,E)$ with $\abs A=p$, left degree:
\begin{equation*}
d\leq\big( (\theta_0\,{\log p}\log s)/\varepsilon\big)^{1+\frac1\alpha}\,,
\end{equation*}
and right side vertices $($of size $n=\abs B)$ such that:
\begin{equation}\label{right size}
 n\leq s^{1+\alpha}\big( (\theta_0\,{\log p}\log s)/\varepsilon\big)^{2+\frac2\alpha}\,.
\end{equation}
Moreover, $d$ is a power of $2$.
\end{theorem}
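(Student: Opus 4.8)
This is the Parvaresh--Vardy construction of Guruswami, Umans and Vadhan \cite{MR2590822}, and the plan is to reconstruct their argument. First I would set up the graph. Let $q$ be a prime power, $\ell=\lceil\log_q p\rceil$, let $E(Y)\in\mathbb{F}_q[Y]$ be irreducible of degree $\ell$, and let $h\le q$ and $m\ge1$ be parameters fixed in the last step. Identify $A$ with $p$ of the $q^{\ell}$ polynomials $f\in\mathbb{F}_q[Y]$ of degree $<\ell$; for such an $f$ put $f^{(0)}=f$ and $f^{(j)}=f^{h^{j}}\bmod E$ $(1\le j\le m-1)$, and join $f$ to the $q$ vertices $\big(y,f^{(0)}(y),\dots,f^{(m-1)}(y)\big)$, $y\in\mathbb{F}_q$, of $B:=\mathbb{F}_q\times\mathbb{F}_q^{m}$. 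Then $G$ is a simple bipartite graph with left degree $d=q$ and $n=|B|=q^{m+1}$.

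Second, the Parvaresh--Vardy list-recovery bound, which is the algebraic heart. Given lists $L_y\subseteq\mathbb{F}_q^{m}$ $(y\in\mathbb{F}_q)$, I would bound the number of $f$ whose codeword meets $L_y$ on more than $a$ coordinates. Among the nonzero $Q(Y,Z_1,\dots,Z_m)$ with $\deg_{Z_j}Q\le h-1$ and $\deg_Y Q\le D:=\big\lceil(\sum_y|L_y|)/h^{m}\big\rceil$ vanishing at every $(y,w)$ with $w\in L_y$ — one exists because the monomial count $(D+1)h^{m}$ exceeds the number $\sum_y|L_y|$ of constraints — choose one of minimal $Y$-degree. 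For any consistent $f$, the polynomial $R_f(Y):=Q\big(Y,f^{(0)},\dots,f^{(m-1)}\big)$ has degree $<D+m(h-1)(\ell-1)$ and more than $a$ zeros, hence vanishes identically once $a\ge D+mh\ell$. Reducing modulo $E$ yields $\sum_{\vec e}\bar c_{\vec e}\,\phi^{\,e_1+e_2h+\dots+e_mh^{m-1}}=0$ in the field $\mathbb{F}_q[Y]/(E)$, where $\phi=f\bmod E$ and some reduced coefficient $\bar c_{\vec e}$ is nonzero (otherwise $E\mid Q$ and $Q/E$ would be a nonzero valid interpolant of smaller $Y$-degree, since $E$ has no root in $\mathbb{F}_q$, contradicting minimality). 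By uniqueness of base-$h$ digit expansions this is a nonzero univariate polynomial of degree $<h^{m}$, so $\phi$, hence $f$, takes fewer than $h^{m}$ values.

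Third, the passage from list-recovery to vertex expansion, where I would invoke the list-recoverable-code-to-expander reduction of \cite{MR2590822}. In outline: if $|\Gamma(I)|<(1-\varepsilon)d|I|$ for some $I$ with $|I|\le s$, then the coordinate lists $L_y=\{(f^{(0)}(y),\dots,f^{(m-1)}(y)):f\in I\}$ have small total size $\sum_y|L_y|=|\Gamma(I)|$; an averaging argument over the seeds $y$ — equivalently, the lossless-condenser/lossless-expander dictionary — turns this into a configuration of short lists matched by too many messages on an $\varepsilon$-fraction of coordinates, contradicting the second step provided $q\gtrsim mh\ell/\varepsilon$ and $s$ is kept a little below $h^{m}$. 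This reduction, and in particular the treatment of the heavy coordinates in the condenser-to-expander passage, is the main obstacle; it is also exactly where the Parvaresh--Vardy twist $f\mapsto(f,f^{h},\dots,f^{h^{m-1}})$ is indispensable, since plain Reed--Solomon ($m=1$) only yields left degree polynomial in $s$.

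Finally, the parameters. I would choose $h$ and $m$ self-consistently so that $h^{m}\ge s$ and $h\gtrsim(m\ell/\varepsilon)^{1/\alpha}$ (recall $\ell=\lceil\log_q p\rceil$), which forces $m\ell\lesssim\log p\log s$, and take $q$ a prime power with $q=\Theta\big(h^{1+\alpha}\big)$, available by Bertrand's postulate. Then $d=q=\Theta\big(h^{1+\alpha}\big)\approx(m\ell/\varepsilon)^{1+1/\alpha}\le\big((\theta_0\log p\log s)/\varepsilon\big)^{1+1/\alpha}$; and $n=q^{m+1}=q^{m}\cdot q$, so, using $h^{m}\le hs$ and $q=\Theta\big(h^{1+\alpha}\big)$, one checks $n\le s^{1+\alpha}\big((\theta_0\log p\log s)/\varepsilon\big)^{2+2/\alpha}$ for a suitable universal $\theta_0$, which is the assertion.
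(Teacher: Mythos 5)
First, a point of reference: the paper does not prove this theorem at all --- it is imported verbatim from Guruswami, Umans and Vadhan \cite{MR2590822} --- so you are reconstructing a proof the paper merely cites. Your first, second and fourth steps are faithful to that source: the Parvaresh--Vardy graph with $d=q$ and $n=q^{m+1}$, the interpolation of $Q$, the reduction modulo $E$ to a univariate equation over $\mathbb{F}_q[Y]/(E)$, and the parameter choices $h\approx(m\ell/\varepsilon)^{1/\alpha}$, $q=\Theta(h^{1+\alpha})$, $h^m\approx s$ do deliver the stated bounds on $d$ and $n$.

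The gap is in the passage from list recovery to expansion, which you yourself label ``the main obstacle'' and do not carry out, and the interpolation bound as you state it is too weak to close it. If $I$ is a left set with $|I|=K\le s$ and $T=\Gamma(I)$, then by definition every $f\in I$ has all $q$ of its neighbours in $T$, so with $L_y=\{w:(y,w)\in T\}$ every $f\in I$ agrees with the lists on \emph{all} $q$ coordinates: no averaging over seeds, no $\varepsilon$-fraction of coordinates, and no treatment of heavy coordinates is needed --- that machinery belongs to the condenser-with-error and extractor settings, not to the combinatorial expander statement. The real problem is that your conclusion ``fewer than $h^m$ values'' only contradicts the existence of $I$ when $K\ge h^m$; for $K<h^m$ it says nothing. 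The missing idea in \cite{MR2590822} is to restrict the interpolation to the $K$ monomials $Z_1^{e_1}\cdots Z_m^{e_m}$ with $\mathrm{int}(\vec e):=e_1+e_2h+\dots+e_mh^{m-1}<K$ (this still forces each $e_j\le h-1$), so that a coefficient count of $(D+1)K>|T|$ suffices for the existence of $Q$; the contradiction hypothesis $|T|<AK$ with $A:=q-(\ell-1)(h-1)m$ then gives $D\le A-1$, hence $\deg R_f<q$ and $R_f\equiv 0$ for every $f\in I$, and the resulting nonzero univariate polynomial over $\mathbb{F}_{q^\ell}$ has degree $<K$ --- so it cannot have the $K$ distinct roots $\{f\bmod E: f\in I\}$. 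This contradiction yields $|\Gamma(I)|\ge AK\ge(1-\varepsilon)qK$ for every $K\le h^m$, which is the expansion you need; without the degree-$<K$ refinement the argument does not go through.
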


\noindent Note the size $n$ may depend on $p$ and other parameters.

\subsection{Probabilistic construction}
Using Chernoff bounds and Hoeffding's inequality, the following proposition can be shown.

\begin{proposition}[Theorem 4 in \cite{Has}]
\label{prop:Random}
Consider $\varepsilon>0$, $c>1$ and $p\geq 2s$. Then, with probability greater than $1-s\exp(-c\log(p))$, there exists an $(s,\varepsilon)$-unbalanced expander graph $G=(A,B,E)$ with $\abs A=p$, left degree $d$ such that $d\leq C_{1}(c,\varepsilon)\log(p)$ and number of right side vertices, namely $n=\abs B$, such that:
\begin{equation*}
n\leq C_{2}(c,\varepsilon)\,s\log(p)\,,
\end{equation*}
where $C_{1}(c,\varepsilon),C_{2}(c,\varepsilon)$ do not depend on $s$ but on $\varepsilon$.
\end{proposition}

\noindent Thus, with high probability, the normalized adjacency matrix of a random bipartite graph with a number of left side vertices $p$ and a number of right side vertices satisfying:
\begin{equation}\label{eq:BorneOptCS}
n\leq C\,s\log(p)\,,
\end{equation}
where $C>0$ is a universal constant, is an expander code design matrix. Observe that the bound \eqref{eq:BorneOptCS} is optimal, up to a subtractive $s\log(s)$ factor, see for instance Proposition 2.2.18 in \cite{chafai2011interactions}. Hence, random expander design matrices match the optimal bound \eqref{eq:BorneOptCS} on the number of observations.


\subsection{Deterministic construction}

Consider the following polynomial time construction \cite{MR2590822} of an expander code design:
\begin{enumerate}
 \item Choose $p$ the size of the signal $\beta^{\star}$, and $s$ the size of the sparse approximation,
  \item Set $\varepsilon=1/12$ the expansion constant,
  \item Set {$\alpha=1/\log(s)$} a tuning parameter,
  \item Construct an $(2s,\varepsilon)$-unbalanced expander graph $G$, using Theorem \ref{Explicit Construction}.
  \item Set $\X=(1/{\sqrt d})\Phi$, where $\Phi\in\{0,1\}^{n\times p}$ denotes the adjacency matrix of the graph $G$ and $d$ its left degree.
\end{enumerate}
\noindent Observe that the number of observation $n$ satisfies \eqref{right size}. Moreover, Proposition \ref{prop:REExpander} shows $\X$ satisfies the compatibility condition.
\begin{proposition}
With the same constant $\theta_{0}>0$ as in Theorem \ref{Explicit Construction}, the following holds. For all $s,p$ such that $8\leq s\leq p/2$, there exists a deterministic polynomial time construction of a design matrix $\X\in\R^{n\times p}$, where:
\begin{equation}\label{eq:BorneLassoMesures}
n\leq e\,s\,(12\,\theta_{0}\,\log p\log s )^{3\log(s)}\,,
\end{equation}
 with $(s,c_0)$-restricted $\ell_1$-eigenvalue $\phi(s,c_{0})$ larger than $\frac{4-c_{0}}{6}$.
 \end{proposition}
\begin{proof}
Invoke \eqref{right size} with $\alpha=1/\log(s)$ and $\varepsilon=1/12$:
\begin{align*}
n&\leq s^{1+1/\log(s)}\big(12\,\theta_0\,{\log p}\log s\big)^{2+2\log(s)}\,,\\
& \leq e\,s\big(12\,\theta_0\,{\log p}\log s\big)^{3\log(s)}\,,
\end{align*}
using $\log(s)\geq 2$ for $s\geq 8$. Using Proposition \ref{prop:REExpander}, we finish the proof.
\end{proof}

\noindent
Observe that the number of observations \eqref{eq:BorneLassoMesures} is almost optimal, see \eqref{eq:BorneOptCS} for the optimal bound in compressed sensing theory.

\subsection{Lasso case}
Using the aforementioned deterministic construction we derive the following results.
\begin{proposition}\label{prop:principalLassoDeterministe}
With the same constant $\theta_{0}>0$ as in Theorem \ref{Explicit Construction}, the following holds. For all $s,p$ such that $8\leq s\leq p/2$, there exists a deterministic polynomial time construction of a design matrix $\X\in\R^{n\times p}$ where:
\begin{equation*}
n\leq e\,s\,(12\,\theta_{0}\,\log p\log s )^{3\log(s)},
\end{equation*}
satisfying that for all $\lambda_{{\ell}}>10\,\sigma\sqrt{\log p}/3$, it holds:
\begin{equation*}
\norm{\beta^{{\ell}}-\beta^{\star}}_{1}\leq\frac{2}{\big(1-\frac{\lambda_0}{\lambda_{{\ell}}}
\big)-\frac{2}{5}
} \
\min_{\substack{\S\subseteq\{1,\dotsc,p\},\\ \abs{\S}=k,\ k\leq s.}}
\Big[\frac{36}{25}\,\lambda_{{\ell}}\,k+\norm{\beta^\star_{\S^c}}_{1}\Big],
\end{equation*}
where $\lambda_0:=2\,\sigma\sqrt{\log p}$, and:
\begin{equation*}
\norm{\X\beta^{{\ell}}-\X\beta^{\star}}_{{2}}\leq\min_{\substack{\S\subseteq\{1,\dotsc,p\},\\ \abs{\S}=k,\
k\leq s.}}
\Bigg[\frac{24}5\lambda_{{\ell}}\,\sqrt k+\frac{5\norm{\beta^\star_{\S^c}}_{1}}{6\sqrt k}\Bigg],
\end{equation*}
with a probability greater than $1-1/p{\sqrt{2\pi\log p}}$.
\end{proposition}
\begin{proof}
The proof follows from Theorem \ref{thm:principalLasso} and Theorem \ref{Explicit Construction}. 
\end{proof}

\subsection{Dantzig selector case}
\begin{proposition}\label{prop:principalDantzigDeterministe}
With the same constant $\theta_{0}>0$ as in Theorem \ref{Explicit Construction}, the following holds. For all $s,p$ such that $8\leq s\leq p/2$, there exists a deterministic polynomial time construction of a design matrix $\X\in\R^{n\times p}$ where
\[
n\leq e\,s\,(12\,\theta_{0}\,\log p\log s )^{3\log(s)},
\]
satisfying that for all $\lambda_{\mathrm d}>3\,\sigma\sqrt{\log p}$, it holds:
\begin{equation*}
\norm{\beta^{\mathrm d}-\beta^{\star}}_{1}\leq\frac{4}{\big(1-\frac{\lambda_0}{\lambda_{\mathrm d}}
\big)-\frac{1}{3}
} \
\min_{\substack{\S\subseteq\{1,\dotsc,p\},\\ \abs{\S}=k,\ k\leq s.}}
\Big[\frac{36}{25}\,\lambda_{\mathrm d}\,k+\norm{\beta^\star_{\S^c}}_{1}\Big],
\end{equation*}
where $\lambda_0:=2\,\sigma\sqrt{\log p}$, and:
\begin{equation*}
\norm{\X\beta^{\mathrm d}-\X\beta^{\star}}_{{2}}\leq\min_{\substack{\S\subseteq\{1,\dotsc,p\},\\ \abs{\S}=k,\
k\leq s.}}
\Bigg[\frac{24}5\lambda_{\mathrm d}\,\sqrt k+\frac{5\norm{\beta^\star_{\S^c}}_{1}}{6\sqrt k}\Bigg],
\end{equation*}
with a probability greater than $1-1/p{\sqrt{2\pi\log p}}$.
\end{proposition}
\begin{proof}
The proof follows from Theorem \ref{thm:PrincipalDantz} and Theorem \ref{Explicit Construction}. 
\end{proof}

\section{Conclusions}
We consider the design matrices derived form unbalanced expander graphs and show that we can use them to recover an $s$-sparse approximation of any signal of size $p$ using the lasso or the Dantzig selector. We show that one needs only $\mathcal O(s\log(p))$ measurements in the random case, and only $\mathcal O(s(\log(p)\log(s))^{3\log(s)})$ using a deterministic polynomial time construction.

\begin{figure}[!t]
\hspace*{-2.3cm}
\begin{tabular}{cc}
\includegraphics[height=5cm]{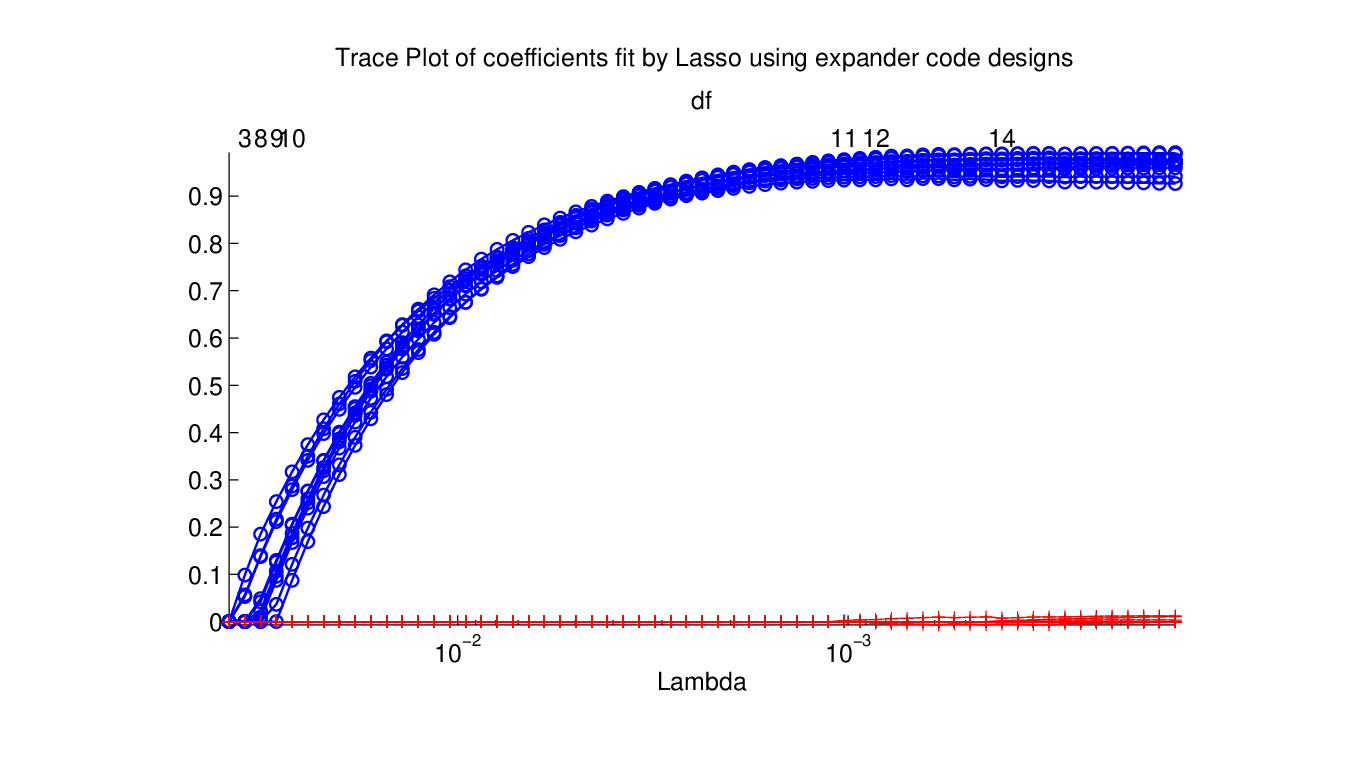}
& 
\hspace*{-1.4cm}\includegraphics[height=5cm]{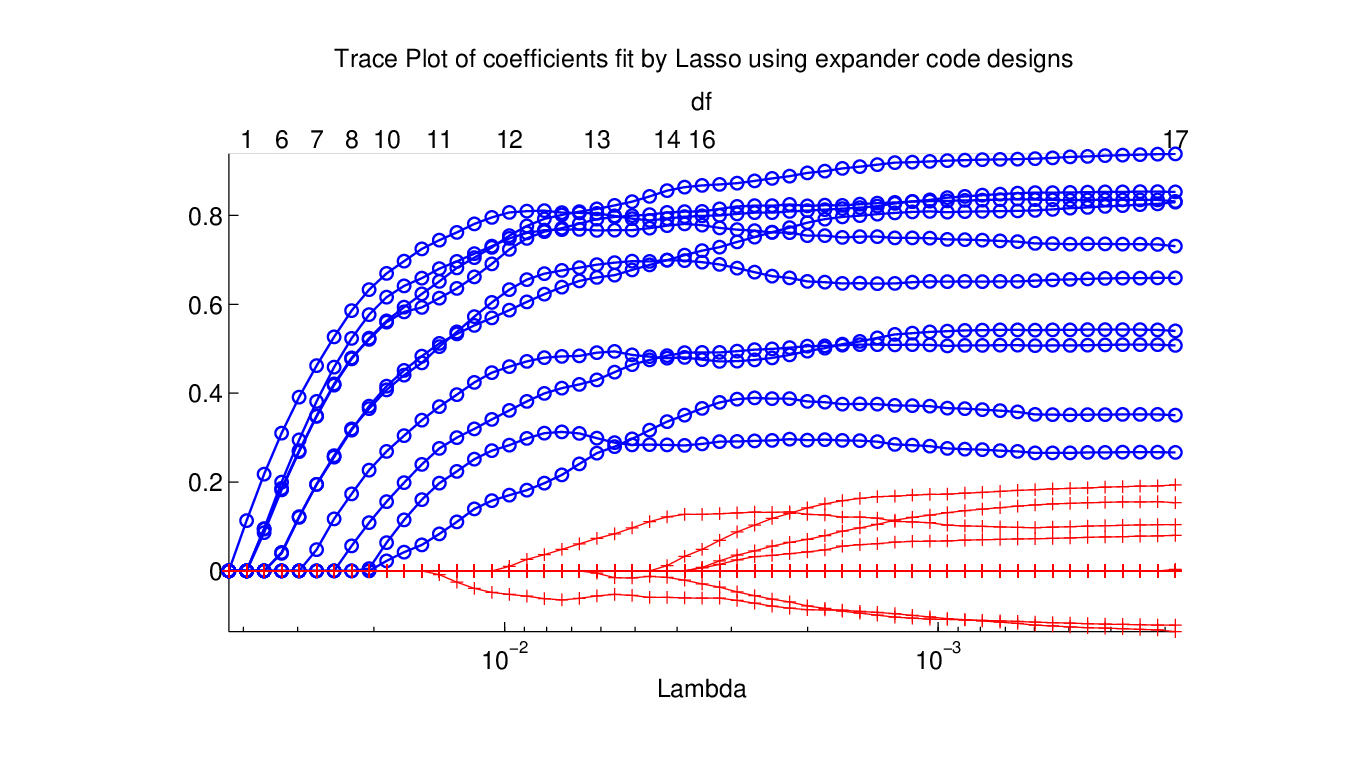}
\\
\includegraphics[height=5cm]{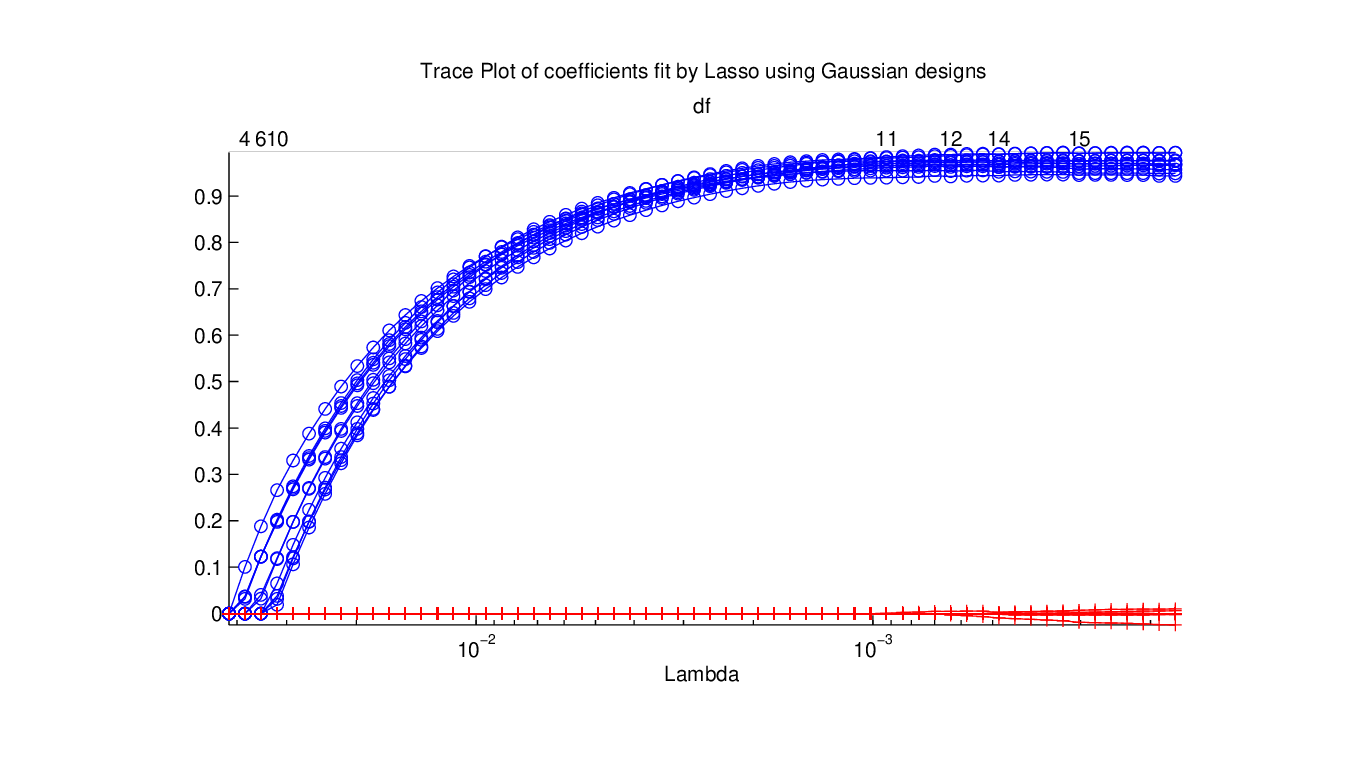} & 
\hspace*{-1.4cm}\includegraphics[height=5cm]{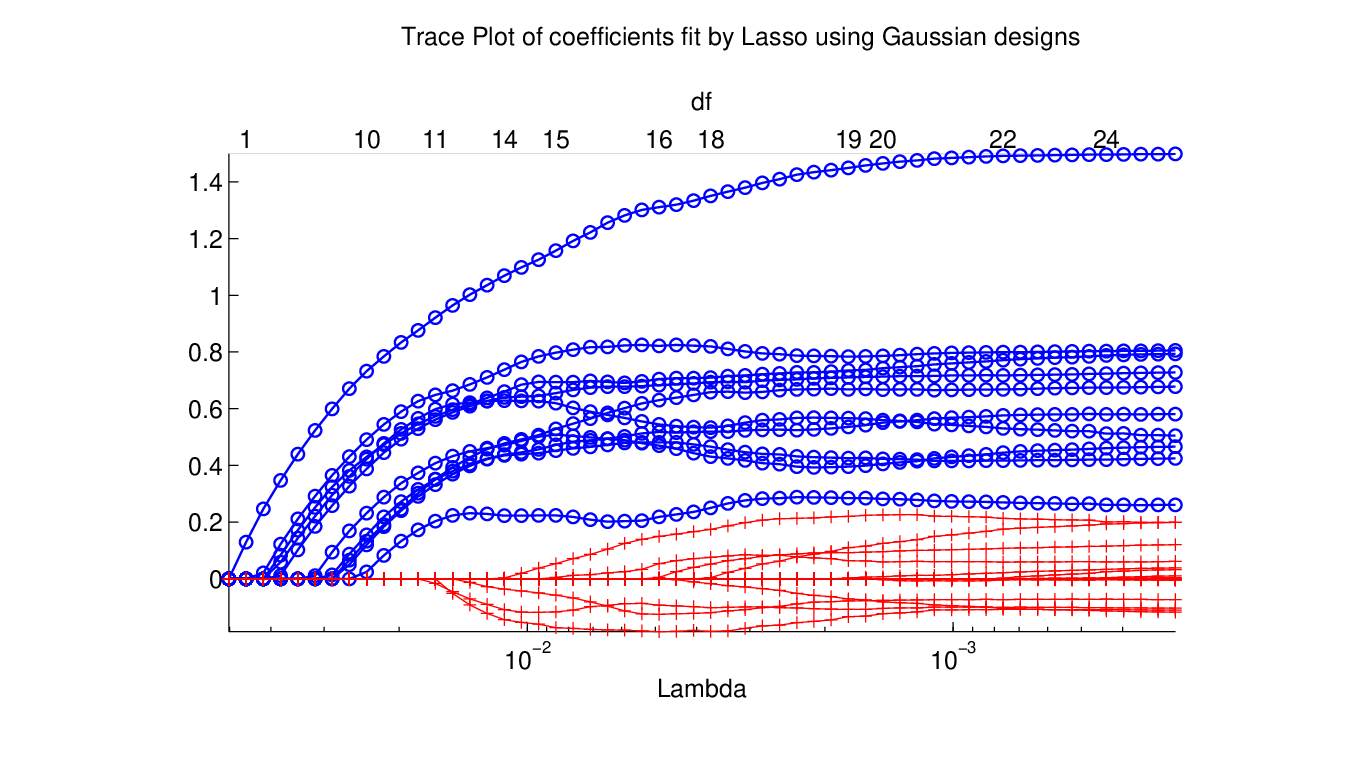}
\\
$\mathrm{SNR}=0\,\mathrm{dB}$
&
\hspace*{-1.4cm}
$\mathrm{SNR}=-6\,\mathrm{dB}$
\end{tabular}
\caption{Solution paths of Lasso using expander code designs (top panel) or Gaussian designs (bottom panel). The original target $\beta^{\star}$ has $10$ non-zero coefficients equal to $1$ (signal amplitude $\lVert\beta^{\star}\lVert_{\infty}$ is one) as in Figure \ref{Fig:fig1}. The blue circles shows the estimated coefficients at the true non-zero entries while the red crosses indicates the zero entries in the target. In these experiments we have chosen $p=5000$, $n=900$, $s=10$. In the left panel $\sigma=0.015$ while in the right panel $\sigma=0.2$. As expander code, we have drawn a uniform bi-partite graph with left degree $d=60$. The Gaussian matrix has i.i.d. $\mathcal N(0,1/n)$ entries. Observe lasso exhibits the same behavior with expander code designs or Gaussian designs. }\label{Fig:fig2}
\end{figure}

Moreover, we have run numerical experiments to compare random expander code design performances with Gaussian design performances. These latter are well-established designs in high-dimensional regression, see for instance \cite{chafai2011interactions,MR2300700,de2013remark}. Interestingly, expander code designs compares favorably and exhibit the same performances as Gaussian designs, see Figure \ref{Fig:fig2}. Simulations agrees with the theoretical guarantees of this paper. As a matter of fact, lasso using expander code designs enjoy the same (almost) optimal upper bound on its risk and prediction errors as lasso using Gaussian designs.

\vspace*{0.5cm}
\noindent\textbf{Acknowledgments} --- The author would like to thank anonymous referees for their fruitful remarks, their time and their patience.

 \bibliographystyle{alpha}
 \bibliography{ExpPred}
 
\end{document}